\newtheorem{theorem}{Theorem}[section]
\newtheorem{lemma}[theorem]{Lemma}
\newtheorem{conjecture}[theorem]{Conjecture}
\newcommand{\p}{\mathfrak{p}}
\newcommand{\Qbar}{\overline{\mathbb{Q}}}
\newcommand{\Tr}{\operatorname{Tr}}
\newcommand{\Z}{\mathbb{Z}}
\newcommand{\GL}{\operatorname{GL}}
\newcommand{\C}{\mathbb{C}}
\newcommand{\Gal}{\operatorname{Gal}}
\newcommand{\Q}{\mathbb{Q}}
\newcommand{\Ree}{\mathrm{Re}}
\newcommand{\comment}[1]{}
\title{Non-vanishing of Artin-twisted $L$-functions of Elliptic Curves}
\author{Thomas Ward}
\begin{document}
\date{}
\maketitle
\noindent{\bf Abstract:}
{\it Let $E$ be an elliptic curve and $\rho$ an Artin representation, both defined over $\Q$. Let $p$ be a prime at which $E$ has good reduction. We prove that there exists an infinite set of Dirichlet characters $\chi$, ramified only at $p$, such that the Artin-twisted $L$-values $L(E,\rho\otimes \chi,\beta)$ are non-zero when $\beta$ lies in a specified region in the critical strip (assuming the conjectural continuations and functional equations for these $L$-functions). The new contribution of our paper is that we may choose our characters to be ramified only at one prime, which may divide the conductor of $\rho$.}
\section{Introduction}
An Artin representation of $\Gal(\Qbar/\Q)$ is a continuous finite-dimensional complex representation which factors through a finite extension $K/\Q$.
If $E$ is an elliptic curve over $\Q$, there exists an $L$-series $L(E,\rho,s)$ associated to $E$ and $\rho$, which appears naturally as a factor of the $L$-function of $E$ over $K$.
\par
The function $L(E,\rho,s)$ is defined by an Euler product which converges only on the region $\Ree(s)>3/2$, but conjecturally it may be analytically continued to the entire complex plane and will satisfy a functional equation (we give further details in \S\ref{AFEsection}).
Assuming this continuation, we are interested in the vanishing properties of $L(E,\rho \otimes \chi,\beta)$ for Dirichlet twists $\chi$, where $\beta$ lies in the critical strip.
\par
Following the work of Langlands, there conjecturally exists a tempered cuspidal automorphic representation $\pi$ of $\GL_n(\mathbb{A}_\Q)$ (where $n=2\dim \rho$) such that $L(E,\rho,s) = L(\pi,s-1/2)$. The existence of such a $\pi$ immediately implies the claimed functional equation and analytic continuation.  
There are many results on the non-vanishing of automorphic $L$-functions in the literature: in particular, in \cite{Rohrlich2} Rohrlich shows the existence of infinitely many ray class characters $\chi$ such that $L(\pi \otimes \chi, \beta) \neq 0$ for any $\beta \in \C$, where $\pi$ is an irreducible cuspidal representation of $\GL_n(\mathbb{A}_F)$ with $n=1$ or $2$, for any number field $F$.
In \cite{Barthel-Ramakrishnan} Barthel and Ramakrishnan extend this result to all $n$ for $\Ree(\beta) \notin [1/n,1-1/n]$, or $\Ree(\beta) \notin [2/(n+1), 1-2/(n+1)]$ when $\pi$ is tempered. 
In \cite{Luo} Luo extends this further to the region $\Ree(\beta) \notin [2/n, 1-2/n]$ when the base field is $\Q$.
\par
These non-vanishing theorems are initially proved for twists which are unramified at the primes dividing the conductor of $\pi$. 
It is simple to relax this assumption: 
one can choose a character $\chi_0$ with no restriction on its ramification, and then apply the non-vanishing theorem to the automorphic representation $\pi \otimes \chi_0$ to obtain an infinite set of twists $\chi$ such that $L(\pi \otimes \chi \otimes \chi_0,\beta) \neq 0$. In this way we get a set of characters $\chi \otimes \chi_0$ which satisfy the desired non-vanishing property, but which can be arbitrarily ramified at the primes dividing the conductor $N_\pi$. However, this method cannot produce a set of twists ramified at only one prime, if that prime divides $N_\pi$.
The purpose of this paper is to prove a non-vanishing result for such a set of twists, for the case of the Artin-twisted $L$-function $L(E,\rho,s)$ over $\Q$.
The precise statement of our main theorem is as follows.
\begin{theorem}\label{thm1}
Let $p$ be a prime at which $E$ has good reduction. 
Suppose that the $L$-function $L(E,\rho\otimes\chi,s)$ satisfies the conjectural analytic continuation and functional equation for all Dirichlet twists $\chi$. Then, for any $\beta$ in the critical strip $\{ s \in \C : 1/2 \leq \Ree(s) \leq 3/2 \}$ satisfying
\[
\Ree(\beta) \; \notin \; \Big[ \frac{1}{2}+ \frac{2}{2 \dim \rho +1} \,, \; \frac{3}{2}- \frac{2}{2 \dim \rho +1} \, \Big]
\]
if $\dim \rho \geq 2$, and $\Ree(\beta) \neq 1$ if $\dim \rho = 1$,
we have $L(E,\rho\otimes\chi,\beta) \neq 0$
for infinitely many Dirichlet characters $\chi$ which are ramified only at $p$. 
\end{theorem}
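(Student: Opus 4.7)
The plan is to follow the mollified first-moment method of Barthel--Ramakrishnan, executed directly on the Dirichlet series of $L(E,\rho\otimes\chi,s)$ so that only the assumed analytic continuation and functional equation are used. The first step is to derive an approximate functional equation, writing $L(E,\rho\otimes\chi,\beta)$ as a pair of smoothed truncated sums $\sum_n a_n(\rho\otimes\chi)\,n^{-\beta} V_\pm(n/X_\pm)$ with $X_\pm$ calibrated to the analytic conductor of $\rho\otimes\chi$. Because $\chi$ is unramified at every $\ell\neq p$, the local Euler factor at such $\ell$ is obtained from that of $L(E,\rho,s)$ by replacing the Frobenius eigenvalues $\alpha_{i,\ell}$ with $\alpha_{i,\ell}\chi(\ell)$, so $a_n(\rho\otimes\chi)=\chi(n)\,a_n(\rho)$ whenever $(n,p)=1$.

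Next, I would average $L(E,\rho\otimes\chi,\beta)\cdot M(\beta,\bar\chi)$ over primitive Dirichlet characters $\chi$ of conductor $p^m$, with $M$ a short Dirichlet polynomial mollifier. Orthogonality on $(\Z/p^m\Z)^\times$, combined with a M\"obius inversion to isolate primitive characters, collapses the averaged sum to contributions from $n\equiv 1\pmod{p^m}$ with $(n,p)=1$; the $n=1$ term furnishes a nonzero main term. The remaining ``diagonal'' terms $n>1$ and the dual sum from the functional equation are negligible precisely in the region asserted in the theorem: under the shift $s\mapsto s-1/2$ the endpoints $\tfrac{1}{2}+\tfrac{2}{2\dim\rho+1}$ and $\tfrac{3}{2}-\tfrac{2}{2\dim\rho+1}$ are the Barthel--Ramakrishnan exponents for a tempered $L$-function on $\GL_{2\dim\rho}$, arising from balancing the mollifier length against the convexity-type bound on the smoothed dual sum.

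The main obstacle, and the source of the paper's novelty, is the behaviour at $p$: when $p\mid N_\rho$ there is no simple multiplicative formula for $a_{p^k}(\rho\otimes\chi)$, and $\chi$ cannot be made unramified at $p$ by the twisting trick recalled in the introduction. The key local lemma I would prove is that once $m$ exceeds the Artin conductor exponent of $\rho$ at $p$, every character of the inertia group $I_p$ appearing in the semisimplification of $\rho_p$ has conductor strictly smaller than that of $\chi_p$ and so cannot cancel $\chi_p^{-1}$; combined with the unramified-ness of the local representation of $E$ at $p$ (good reduction), this forces the inertia invariants of $\sigma_{E,p}\otimes\rho_p\otimes\chi_p$ to vanish, so that the local Euler factor at $p$ equals $1$ and $a_{p^k}(\rho\otimes\chi)=0$ for all $k\geq 1$. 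The global Dirichlet series is therefore supported on integers coprime to $p$ and the first-moment calculation of the previous step applies verbatim. Taking $m$ arbitrarily large then yields, for each sufficiently large $m$, at least one primitive $\chi$ of conductor $p^m$ with $L(E,\rho\otimes\chi,\beta)\neq 0$, which produces the required infinite family.
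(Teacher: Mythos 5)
Your overall architecture (approximate functional equation, average over primitive characters mod $p^m$, main term from $n=1$, and a local analysis at $p$ for highly ramified $\chi$) matches the paper's, and your inertia-invariant observation that the Euler factor at $p$ becomes trivial once the conductor of $\chi$ exceeds that of the one-dimensional inertia constituents of $\rho$ is correct in spirit. But there is a genuine gap at the heart of the argument: the treatment of the dual sum coming from the functional equation. You invoke the Barthel--Ramakrishnan exponents and a ``convexity-type bound,'' yet the mechanism by which Barthel--Ramakrishnan (and Rohrlich, Luo) control the dual sum is the explicit factorization of the epsilon factor of $\pi\otimes\chi$ into $\epsilon(\pi)$ times Gauss sums and values $\chi(N_\pi)$, valid only when $\chi$ is unramified at the primes dividing the conductor --- exactly the hypothesis that fails here, since $\chi$ is ramified at $p$ and $p$ may divide $N_\rho$. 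In your setting the root numbers $w(E,\rho\otimes\chi)$ vary with $\chi$ in a way that cannot be read off from the assumed functional equation, and your proposal never says how they are handled. If instead one bounds the dual sum term-by-term (which is all a per-character convexity-type estimate gives), the threshold one obtains is $\Ree(\beta)>\frac{3d+1}{2d+2}$ (with $d=\dim\rho$), which is strictly weaker than the claimed $\Ree(\beta)>\frac{6d-1}{4d+2}=\frac{3}{2}-\frac{2}{2d+1}$ for every $d$. The paper reaches the stated exponent precisely by exploiting cancellation over the family without knowing the root numbers: it applies Cauchy's inequality to strip off $w_\chi$ (using only $|w_\chi|=1$), then Poisson summation in $n$, bounding the resulting imprimitive character sums $\tau_h(\chi\overline{\psi})$ trivially by $p^a$; the off-diagonal $(\chi\neq\psi)$ terms are then negligible and the diagonal gives the square-root saving in the family size that produces the exponent $2/(2d+1)$. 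Adding a mollifier, as you suggest, does not remove this obstruction --- it makes the dual side harder, since the mollifier coefficients get entangled with the unknown $w_\chi$.

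A second, smaller gap: you calibrate the approximate functional equation ``to the analytic conductor of $\rho\otimes\chi$'' but never determine it. Your local lemma identifies the Euler factor (equivalently, the Dirichlet coefficients at powers of $p$), which is not the same as the conductor exponent. The paper devotes its Section 2 to exactly this point: via a higher-ramification-group argument (Lemma 2.1) it proves $n_p(\rho\otimes\chi)=n_p(\chi)\dim\rho$ once $n_p(\chi)$ is large, whence $N(E,\rho\otimes\chi)=Np^{2m\dim\rho}$ uniformly over primitive $\chi$ mod $p^m$ (the factor $2$ coming from $H^1(E)$ being unramified of dimension $2$ at $p$). This uniformity is what allows a single choice of the splitting parameters $x,y$ with $xy=p^{2dm}$ across the whole family and is what feeds into the exponent bookkeeping above; without it the balancing that yields $\frac{3}{2}-\frac{2}{2d+1}$ cannot be carried out. (Also, as a minor point, the inertia constituents of $\rho$ at $p$ need not all be characters; the argument should be phrased in terms of the one-dimensional constituents, or, as in the paper, in terms of ramification groups eventually landing in $\ker\rho$.)
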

We must point out that our theorem is weaker than that of Barthel and Ramakrishnan from \cite{Barthel-Ramakrishnan}: it holds for a more restricted class of tempered cuspidal automorphic representations and only for base field $\Q$.
Further, it holds on a smaller region than Luo's result from \cite{Luo}. Our new contribution is to allow the twists $\chi$ to be ramified at only a single prime, which may divide the conductor of $\rho$.
To prove Theorem \ref{thm1} we follow a standard technique of using the approximate functional equation and averaging over twists. In particular we follow the method of Luo from \cite{Luo} (previously used by Iwaniec in \cite{Iwaniec}). 
\par
We conclude the introduction with the following remark: suppose $\rho$ is 2-dimensional, irreducible and odd. 
Under these assumptions, except for certain icosahedral examples of $\rho$, it is known that $\rho$ is equivalent to the representation given by a weight 1 newform (this is proved by Buzzard, Dickinson, Shepherd-Barron and Taylor in \cite{Buzzard-et-al}). As we also know that $E$ is modular by the work of Wiles et al in \cite{Wiles} and \cite{Breuil-et-al} we may write $L(E,\rho,s)$ as a Rankin convolution of modular forms. For such an $L$-function, the functional equation and continuation are known (in fact, the existence of the automorphic representation $\pi$ of $\GL_4$ has been proved by Ramakrishnan in \cite{Ramakrishnan})
so Theorem \ref{thm1} holds unconditionally in this case.
\section{Conductors of twisted Artin representations}\label{ConductorsSection}
In our main theorem, we impose no restrictions on the ramification of $\rho$ at the prime $p$. For this reason we first prove a result on the Artin conductors of the twists $\rho\otimes\chi$; we begin with a preparatory lemma on ramification groups.
\begin{lemma}\label{irholemma}
Let $p$ be a prime. Let $L/\Q_p$ be a finite Galois extension of local fields, and suppose  
\[
\phi \;:\;\Gal(L/\Q_p) \rightarrow A
\]
is a homomorphism of groups. Let $K_n=\Q_p(\mu_{p^n})$ and regard $\rho$ as a homomorphism $\Gal(LK_n/\Q_p) \rightarrow A$ by extension through the quotient map.
Then there exists a fixed integer $i_\phi$ such that
\[
G_i(LK_n/\Q_p) \;\subseteq \; \ker \phi
\]
for all $i \geq i_\phi$, for all sufficiently large $n$. Here $G_i(LK_n/\Q_p)$ denotes the $i$-th ramification group of the extension $LK_n/\Q_p$.
\end{lemma}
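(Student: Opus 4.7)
The plan is to work with Herbrand's upper numbering, in which the ramification filtration behaves well under quotients, and then to convert the resulting bound back to lower numbering using the tower $\Q_p \subseteq K_n \subseteq LK_n$. First, since $G^u(L/\Q_p) = 1$ for all sufficiently large $u$, I fix a real number $u_\phi$ with $G^u(L/\Q_p) \subseteq \ker\phi$ for every $u \geq u_\phi$. Herbrand's theorem applied to the quotient map $\Gal(LK_n/\Q_p) \twoheadrightarrow \Gal(L/\Q_p)$ says that the image of $G^u(LK_n/\Q_p)$ equals $G^u(L/\Q_p)$; since the kernel $\Gal(LK_n/L)$ of this map is already contained in $\ker\phi$, we obtain
\[
G^u(LK_n/\Q_p) \;\subseteq\; \ker\phi \qquad \text{for every } u \geq u_\phi \text{ and every } n.
\]

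The harder step is to translate this into an $n$-uniform statement in lower numbering. I would use the composition formula $\psi_{LK_n/\Q_p} = \psi_{LK_n/K_n} \circ \psi_{K_n/\Q_p}$ for the tower $\Q_p \subseteq K_n \subseteq LK_n$, and bound the two factors separately. For the inner factor, the explicit description of the ramification of $\Q_p(\mu_{p^n})/\Q_p$ shows that $\psi_{K_n/\Q_p}$ is piecewise linear with slopes $p^{m-1}(p-1)$ on the upper-numbering intervals $[m-1,m]$; these slopes are independent of $n$, so $\psi_{K_n/\Q_p}(u_\phi)$ equals some fixed constant $c_\phi$ for all sufficiently large $n$. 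For the outer factor, the trivial estimate
\[
\psi'_{LK_n/K_n}(t) \;=\; [G_0(LK_n/K_n) : G^t(LK_n/K_n)] \;\leq\; [LK_n : K_n] \;\leq\; [L:\Q_p]
\]
gives $\psi_{LK_n/K_n}(c_\phi) \leq c_\phi \cdot [L:\Q_p]$ uniformly in $n$. Combining, $\psi_{LK_n/\Q_p}(u_\phi) \leq c_\phi \cdot [L:\Q_p]$ uniformly in $n$.

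Setting $i_\phi := \lceil c_\phi [L:\Q_p] \rceil$, we then have $\varphi_{LK_n/\Q_p}(i) \geq u_\phi$ for every $i \geq i_\phi$ and every sufficiently large $n$, whence
\[
G_i(LK_n/\Q_p) \;=\; G^{\varphi_{LK_n/\Q_p}(i)}(LK_n/\Q_p) \;\subseteq\; \ker\phi,
\]
as required. The main obstacle is the middle paragraph: obtaining a uniform bound on $\psi_{LK_n/\Q_p}(u_\phi)$ is nontrivial because the inertia of $LK_n/\Q_p$ grows without bound in $n$, so the crude estimate $\psi'(t) \leq |G_0|$ fails directly. The composition formula is precisely what separates the (unbounded) contribution of the cyclotomic tower from the (bounded) contribution of $L$, reducing the uniformity to an explicit calculation for $\psi_{K_n/\Q_p}$ together with a crude group-index estimate on the outer piece.
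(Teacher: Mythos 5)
Your argument is correct: the passage to upper numbering via quotient-compatibility, the transitivity $\psi_{LK_n/\Q_p}=\psi_{LK_n/K_n}\circ\psi_{K_n/\Q_p}$, the explicit slopes $p^{m-1}(p-1)$ for the cyclotomic piece, and the crude bound $\psi'_{LK_n/K_n}\leq[LK_n:K_n]\leq[L:\Q_p]$ all check out, and together they give the required $n$-uniform bound $\psi_{LK_n/\Q_p}(u_\phi)\leq c_\phi[L:\Q_p]$. However, your decomposition differs from the paper's. The paper stays in lower numbering and applies Herbrand's theorem to the quotient by $H=\Gal(LK_n/L)$, i.e.\ it works with the tower $\Q_p\subseteq L\subseteq LK_n$: it reduces the lemma to showing that $\eta_{LK_n/L}(i)$ exceeds any fixed threshold $t'$ (with $G_{t'}(L/\Q_p)=1$) once $i\geq i_\phi$, uniformly in $n$, and proves this by bounding the orders $|G_i(LK_n/L)|$ from below via the containments $G_i(LK_n/L)\supseteq\Gal(LK_n/LK_s)$ for $p^{s-1}\leq i\leq p^s-1$ and summing. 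You instead split through the cyclotomic field, $\Q_p\subseteq K_n\subseteq LK_n$, so that all of the $n$-dependence sits in the explicitly known Herbrand function of $K_n/\Q_p$, while the unknown top layer $LK_n/K_n$ has degree bounded independently of $n$ and only needs the trivial derivative estimate. The trade-off: the paper's route avoids invoking the upper-numbering quotient theorem and the transitivity formula for $\psi$, at the cost of the ``one checks'' lower bound on the ramification groups of the composite extension $LK_n/L$ (which is essentially the cyclotomic ramification data transported up to $L$); your route replaces that computation by standard quoted facts and has the mild additional benefit of producing an explicit admissible value $i_\phi=\lceil c_\phi[L:\Q_p]\rceil$. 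Both proofs ultimately rest on the same phenomenon, namely that the breaks of the cyclotomic tower are explicit and march off to infinity.
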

\begin{proof}
Let $H=\Gal(LK_n/L)$, so that $H \subseteq \ker \phi$. Quoting \cite[Chapter II, \S10]{Neukirch} we have the formula
\[
\frac{G_i(LK_n/\Q_p) \,H}{H} \;=\; G_t(L/\Q_p)
\]
where $t=\eta_{LK_n/L}(i)$. Here, $\eta_{LK_n/L}$ is the function which defines the upper numbering for the ramification groups of $LK_n/L$ (see \cite{Neukirch}). Let $t'$ be the some integer such that $G_{t'}(L/\Q_p)=\{1\}$. 
We will show that there exists a fixed $i_\phi$ such that $\eta_{LK_n/L}(i) \geq t'$ when $i\geq i_\phi$, for all sufficiently large $n$. This will establish the claim, as the formula above implies $G_i(LK_n/\Q_p) \subseteq H \subseteq \ker \phi$ 
for such $i$.
\par
By definition, we have 
\[
\eta_{LK_n/L}(i) \;=\; \frac{1}{g_0} (g_1+g_2+\dots+g_i)
\]
where we write $g_i = |G_i (LK_n/L)|$. Enlarging $L$ if necessary, assume that $\Q_p(\mu_{p^\infty}) \cap L=K_m$. Suppose we have $p^s-1 \geq i \geq p^{s-1}$, where $n \geq s \geq m$. One checks that
\[
G_i (LK_n/L) \;\supseteq\; \Gal (LK_n/LK_s) 
\]
so for such $i$ we have $g_i \geq p^{n-s}$. We also observe that $g_i=p^{n-m}$ for $p^{m-1}\geq i \geq 0$. Therefore we have
\begin{eqnarray*}
\eta_{LK_n/L}(p^s-1)
& \geq &
\frac{1}{g_0} \sum_{i=1}^{p^{m-1}-1} g_0  \;+\; \frac{1}{g_0} \sum_{t=m}^s  p^{n-s} (p^s-p^{s-1})
 \\
 &=& p^{m-1}(1+(s-m)(p-1))-1.
\end{eqnarray*}
We may choose $s$ to make this quantity as large as we like, independently of $n$ (provided $n$ is sufficiently large), which completes the proof.
\end{proof}
Now let $\rho$ be an Artin representation of $\Gal(\Qbar/\Q)$. The global Artin conductor associated to $\rho$ may be given as a product
\[
N_\rho \;=\; \prod_{p \; \mathrm{prime}} p^{n_p(\rho)} 
\]
where the local conductors $n_p(\rho)$ are almost all zero (their precise definition is given below).
\begin{lemma}\label{newlemma}
We have
\[
n_p(\rho\otimes\chi) \; = \; n_p(\chi) \dim \rho
\]
when $n_p(\chi)$ is sufficiently large.
\end{lemma}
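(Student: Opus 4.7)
The plan is to work directly from the lower-numbering conductor formula and use Lemma~\ref{irholemma} to reduce the question to a bounded-range, termwise comparison. Choose a finite Galois extension $L/\Q_p$ through which $\rho$ factors, take $n$ large enough that $\chi$ (viewed locally at $p$) also factors through $\tilde G := \Gal(LK_n/\Q_p)$, and write $G_i = G_i(LK_n/\Q_p)$. The Artin conductor of any finite-dimensional representation $\sigma$ of $\tilde G$ is then
\[
n_p(\sigma) \;=\; \sum_{i=0}^{\infty} \frac{|G_i|}{|G_0|}\bigl(\dim\sigma - \dim\sigma^{G_i}\bigr),
\]
and I want to verify that this formula for $\sigma = \rho \otimes \chi$ reduces termwise to $\dim\rho$ times the same formula for $\sigma = \chi$.

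The first step is to apply Lemma~\ref{irholemma} with $\phi = \rho$, which produces a fixed integer $i_\rho$ with $G_i \subseteq \ker\rho$ for all $i \geq i_\rho$ and all sufficiently large $n$. I would then split each conductor sum at $i_\rho$. In the high range $i \geq i_\rho$, $\rho$ acts trivially on $G_i$, so $G_i$ acts on $V \otimes \chi$ only through the character $\chi|_{G_i}$; this forces $(V \otimes \chi)^{G_i}$ to equal $V$ if $\chi|_{G_i}$ is trivial and $0$ otherwise, yielding the termwise identity
\[
\dim\rho - \dim(V \otimes \chi)^{G_i} \;=\; \dim\rho\cdot\bigl(1 - \dim\chi^{G_i}\bigr).
\]
Hence the contribution from $i \geq i_\rho$ to $n_p(\rho\otimes\chi)$ is already exactly $\dim\rho$ times the contribution to $n_p(\chi)$.

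It remains to handle the bounded range $i < i_\rho$. Here I would argue that for $n_p(\chi)$ sufficiently large one has both (a) $\chi|_{G_i}$ nontrivial, and (b) $\chi^{-1}|_{G_i}$ not appearing as a one-dimensional constituent of $V|_{G_i}$; in that case $\chi^{G_i} = 0$ and $(V\otimes\chi)^{G_i} = 0$, so each index contributes exactly $\dim\rho \cdot |G_i|/|G_0|$ on both sides. For (a), if $\chi|_{G_{i_\rho-1}}$ were trivial then $\chi$ would factor through $\tilde G/G_{i_\rho-1}$, whose characters have conductor bounded in terms of $\rho$ alone, contradicting $n_p(\chi) \gg 0$. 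For (b), the one-dimensional constituents of $V|_{G_i}$ pull back from the fixed finite quotient $\Gal(L/\Q_p)$ and so form a finite list of characters of bounded depth in the ramification filtration, which a sufficiently ramified $\chi|_{G_i}$ must avoid.

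The main obstacle is making the low-index analysis genuinely uniform: as $n$ grows, the groups $G_i$ themselves grow, and one has to rule out all coincidences $\chi|_{G_i} = \eta$ for every $\eta$ in the finite, $\rho$-dependent ``bad'' list, across all admissible $n$. A cleaner way to package the same argument, which I would fall back on if the bookkeeping becomes unwieldy, is via the break (slope) decomposition of $\rho$: once the slope of $\chi$ exceeds every slope occurring in $\rho$, the principle of maximum slope forces every break of $\rho \otimes \chi$ to equal that of $\chi$, so $\operatorname{sw}(\rho \otimes \chi) = \dim\rho \cdot \operatorname{sw}(\chi)$, and combined with the vanishing of inertia invariants this yields $n_p(\rho \otimes \chi) = \dim\rho\cdot(1 + \operatorname{sw}(\chi)) = \dim\rho \cdot n_p(\chi)$.
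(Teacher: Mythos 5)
Your main line is essentially the paper's own argument: the lower-numbering conductor formula for $\rho\otimes\chi$ on $\Gal(LK_n/\Q_p)$, Lemma~\ref{irholemma} to get an $n$-independent $i_\rho$, and a termwise comparison split at $i_\rho$ (the paper phrases the fixed-space dimension as the inner product $\frac{1}{g_i}\sum_{s\in G_i}\Tr\rho(s)\chi(s)$, which is the same thing). The one place where you are looser than the paper is your step (b) and the ``uniformity obstacle'' you flag afterwards: there is in fact nothing to worry about, and the paper's way of closing it is sharper than your ``bounded depth'' heuristic. Namely, every one-dimensional constituent $\eta$ of $V|_{G_i}$ is automatically trivial on $G_{i_\rho}\subseteq\ker\rho$, whereas your own bounded-conductor argument from (a), applied one step deeper (if $\chi$ were trivial on $G_{i_\rho}$ then $n_p(\chi)\leq i_\rho$, which is bounded independently of $n$ by Lemma~\ref{irholemma}), shows that a sufficiently ramified $\chi$ is nontrivial on $G_{i_\rho}$; hence $\overline{\chi}|_{G_i}$ can never coincide with such an $\eta$, uniformly in $n$, and $(V\otimes\chi)^{G_i}=0$ for all $i<i_\rho$. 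With that one-line patch your argument is exactly the paper's proof. Your fallback via the break (slope) decomposition is also correct and is a genuinely different route: once the slope of $\chi$ exceeds every break of $\rho|_{\Gal(\Qbar_p/\Q_p)}$, the maximum-slope principle gives $\mathrm{sw}(\rho\otimes\chi)=\dim\rho\cdot\mathrm{sw}(\chi)$ and the vanishing of inertia invariants, so $n_p(\rho\otimes\chi)=\dim\rho\,(1+\mathrm{sw}(\chi))=\dim\rho\cdot n_p(\chi)$; this buys you a cleaner, more standard statement that bypasses Lemma~\ref{irholemma} entirely, at the cost of invoking the break-decomposition machinery rather than the elementary ramification-group computation the paper uses.
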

\begin{proof}
Fix a character $\chi$ of conductor $p^n$; we know that $\chi$ factors through $\Gal(K_n/\Q)$ where $K_n=\Q(\mu_{p^n})$. As $\rho$ is an Artin representation, $\rho$ factors through $\Gal(L/\Q)$ for some finite extension $L/\Q$. We may regard $\rho \otimes \chi$ as a representation of $\Gal(LK_n/\Q)$.
\par
Let $G$ be the decomposition group of $LK_n/\Q$ at $p$, and let $G_i =G_i((LK_n)_{\p_n}/\Q_p)$ be the $i$-th ramification group, where $\p_n$ denotes a prime of $LK_n$ above $p$. By definition of the Artin conductor (see \cite[Chapter VII]{Neukirch}) we have
\[
n_p(\rho \otimes \chi) \;=\; \sum_{i=0}^{\infty} \frac{g_i}{g_0} \left( \dim \rho -\frac{1}{g_i} \sum_{s \in G_i} \Tr \rho(s) \, \chi(\rho) \right) 
\]
where $g_i = |G_i|$. We observe that
\[
 \frac{1}{g_i} \sum_{s \in G_i} \chi(s) \Tr (\rho)
\]
is the inner product of the characters $\rho$ and $\overline{\chi}$.  This equals the number of copies of $\overline{\chi}$ in $\rho$, as representations restricted to $G_i$. 
Let $i_\rho$ be the smallest integer such that $G_i \subseteq \ker \rho$
for $i \geq i_\rho$. By Lemma \ref{irholemma} we know that $i_\rho$ is independent of $n$, so by taking $n$ sufficiently large we can assume that $\chi$ is non-trivial on $G_{i_\rho}$. Therefore if $i< i_\rho$, the sum above must be zero as $\rho$ restricted to $G_{i_\rho}$ is a sum of copies of the trivial representation, and $\chi$ is non-trivial. Also, if $i \geq i_\rho$ then $\Tr \rho (s) = \dim \rho$ for any $s \in G_i$ by the definition of $i_\rho$. Putting this into the formula above we deduce that $n_p(\rho\otimes\chi) \; = \; n_p(\chi) \dim \rho$.
\end{proof}
\section{The approximate functional equation}\label{AFEsection}
Let $E$ be an elliptic curve and $\rho$ an Artin representation, both defined over $\Q$. 
The $L$-series of $E$ twisted by $\rho$ is defined by setting
\[
L(E,\rho,s) \;=\; \prod_{q \; \mathrm{prime}} P_q(E,\rho,q^{-s})^{-1}
\]
where the local polynomial at the prime $q$ is given by
\[
P_q(E,\rho,T) \;=\; \det \Big( 1-\mathrm{Frob}_q^{-1} \cdot T \big| (H_l^1(E) \otimes_{\Qbar_l} V_{\rho,l} ) \Big).
\]
where $l$ is a prime different from $q$. Here, $H_l^1(E)$ is the dual of the $l$-adic Tate module of $E$, and $V_{\rho,l}$ is the representation obtained from $\rho$ by extending scalars to $\Qbar_l$.
This Euler product converges only on the right half-plane $\Ree(s)>3/2$ but conjecturally it may be continued to a holomorphic function on the whole complex plane.
Let us put
\[
L_\infty(s) \; = \;  \left( 2 (2 \pi)^{-s} \Gamma(s) \right)^{\dim \rho}  
\]
and define the completed $L$-function
\[
\widehat{L}(E,\rho,s) \; := \; L_\infty (s) \, L(E,\rho,s) .
\]
We also write $N(E,\rho)$ for the conductor associated to the twist of $E$ by $\rho$. Then, we have the following conjecture (see \cite{Tate} or \cite[\S2]{VladTim} for example):
\begin{conjecture}\label{feqn}
The completed $L$-function $\widehat{L}(E,\rho,s)$ has an analytic continuation to the whole complex plane, and satisfies the functional equation
\[
\widehat{L}(E,\rho,s) \;=\;  w(E,\rho) \, N(E,\rho)^{1-s} \, \widehat{L}(E,\rho^*,2-s)  
\]
where $\rho^*$ is the contragredient representation to $\rho$, and the root number $w(E,\rho)$ is a complex number of absolute value $1$.
\end{conjecture}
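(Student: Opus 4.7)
The plan is to reduce the conjecture to established analytic properties of automorphic $L$-functions via the Langlands program. First, I would invoke modularity of $E$ (Wiles, Breuil--Conrad--Diamond--Taylor) to attach a cuspidal automorphic representation $\pi_E$ of $\GL_2(\mathbb{A}_\Q)$ with $L(E,s) = L(\pi_E, s-1/2)$. Next, I would appeal to the strong Artin conjecture to attach to $\rho$ a cuspidal automorphic representation $\pi_\rho$ of $\GL_{\dim\rho}(\mathbb{A}_\Q)$ with $L(\rho,s) = L(\pi_\rho,s)$. The goal is then to produce a cuspidal automorphic $L$-function on $\GL_{2\dim\rho}(\mathbb{A}_\Q)$ whose Euler factors match those of $L(E,\rho,s)$ up to the shift by $1/2$, so that the general theory of Godement--Jacquet and Jacquet--Shalika yields the analytic continuation and functional equation; the conductor $N(E,\rho)$ and the archimedean factor $L_\infty(s)$ should match the expected recipe from local Langlands at the ramified and archimedean places, and the root number emerges with absolute value $1$ as a product of local $\varepsilon$-factors.

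The heart of the matter, and the main obstacle, is the functorial tensor lift $\pi_E \boxtimes \pi_\rho$ on $\GL_{2\dim\rho}$. For $\dim\rho = 1$ this is immediate since $\pi_E\otimes\chi$ is again a cuspidal automorphic representation on $\GL_2$, and the conjecture reduces to the classical case of $L(E,\chi,s)$. For $\dim\rho = 2$, Ramakrishnan's construction of the $\GL_2\times\GL_2 \to \GL_4$ automorphic tensor product supplies the lift, and combined with Khare--Wintenberger and Buzzard--Dickinson--Shepherd-Barron--Taylor modularity of odd irreducible two-dimensional $\rho$ this settles the conjecture unconditionally in the case already highlighted by the author in the introduction. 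For $\dim\rho \geq 3$, both the strong Artin conjecture for $\rho$ and the functorial tensor lift $\GL_2\times\GL_n\to\GL_{2n}$ are deep open problems, so in this regime the conjecture remains genuinely conjectural.

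A slightly weaker but more flexible route, sufficient for the applications later in this paper, is to bypass the construction of $\pi_E \boxtimes \pi_\rho$ as a single automorphic representation and instead apply the Jacquet--Piatetski-Shapiro--Shalika Rankin--Selberg theory for $\GL_2 \times \GL_{\dim\rho}$ directly to the pair $(\pi_E, \pi_\rho)$. This already produces the meromorphic continuation and functional equation of $L(\pi_E \times \pi_\rho, s)$ with the correct gamma factors and root number, without requiring the convolution to be cuspidal. Under either formulation, the essential obstruction reduces to the strong Artin conjecture for $\rho$, which is the only genuinely open input.
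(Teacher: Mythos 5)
The statement you have been asked about is Conjecture~\ref{feqn} of the paper: the author never proves it, but assumes it as a hypothesis in Theorem~\ref{thm1}, citing Tate and Dokchitser--Dokchitser only for its formulation. So there is no proof in the paper to compare against. The closest the paper comes is the remark in the introduction that the conjectural existence of a tempered cuspidal automorphic representation $\pi$ of $\GL_{2\dim\rho}(\mathbb{A}_\Q)$ with $L(E,\rho,s)=L(\pi,s-1/2)$ immediately gives the continuation and functional equation, and that for $\rho$ two-dimensional, odd, irreducible and non-icosahedral the statement holds unconditionally via Buzzard--Dickinson--Shepherd-Barron--Taylor, modularity of $E$, and Ramakrishnan's $\GL_2\times\GL_2$ lift. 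Your proposal is essentially an expansion of that same remark, and as such it is consistent with the paper's framing; but be clear that it is a conditional reduction, not a proof. As you concede, for $\dim\rho\geq 3$ the strong Artin conjecture and the tensor-product functoriality $\GL_2\times\GL_n\to\GL_{2n}$ are open, and even for $\dim\rho=2$ the even irreducible (non-dihedral) case of strong Artin is open, so nothing here upgrades the conjecture to a theorem beyond the case the author already isolates.

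Two smaller points if you want the conditional argument to be tight. First, the conjecture asserts \emph{analytic} continuation, whereas Jacquet--Piatetski-Shapiro--Shalika Rankin--Selberg theory for a cuspidal pair gives a priori only meromorphic continuation, with poles possible exactly when $\pi_\rho$ is an unramified twist of the contragredient of $\pi_E$; you must rule this out, which is easy by comparing archimedean components ($\pi_\rho$ is of Artin type, with archimedean $L$-factors built from $\Gamma_{\mathbb{R}}$ of weight $0$, while $\pi_E$ is cohomological of weight $2$), so the completed $L$-function is in fact entire. Second, the precise shape of the functional equation --- the conductor $N(E,\rho)$, the archimedean factor $\left(2(2\pi)^{-s}\Gamma(s)\right)^{\dim\rho}$, and $|w(E,\rho)|=1$ as a product of local $\varepsilon$-factors --- requires the identification of the local Rankin--Selberg factors with the Galois-side (Artin--Deligne) local factors under the local Langlands correspondence; this is known (Henniart's compatibility results), but it is a genuine input that should be cited rather than described as ``the expected recipe.''
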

Suppose $\rho$ is an Artin representation for which Conjecture \ref{feqn} is satisfied. Let
\[
L(E,\rho,s)  \; = \; \sum_{n \geq 1} c_n n^{-s} 
\]
be the Dirichlet series expression which is valid for $\Ree(s)>3/2$.
For $\beta \in \C$ and positive $u \in \mathbb{R}$ we define the function 
\[
F_\beta (u)  \;:=\;  \frac{1}{2\pi i} \int_{\Ree(s)=2} L_\infty(s+\beta) \, u^{-s} \, \frac{ds}{s}.
\]
Now we assume that $1 \leq \Ree(\beta) \leq 3/2$.
Using Cauchy's theorem and the functional equation, we obtain the following formula (the approximate functional equation):
\begin{equation}\label{AFE}
\widehat{L}(E,\rho,\beta)  \;=\; \sum_{n \geq 1} c_n \, n^{-\beta} F_\beta (ny) \;+\; w  \, N^{1-\beta} \sum_{n \geq 1} c_n^* \, n^{\beta-2} F_{2-\beta} \left( \frac{n}{N y} \right) 
\end{equation}
for $y>0$, where $N=N(E,\rho)$ and $w=w(E,\rho)$. We also note that the function $F_\beta (u)$ has the following properties:
\[
u^{k} F_\beta (u) \rightarrow 0 \quad \mbox{as} \quad u \rightarrow \infty
\]
for any $k\geq 0$, and
\[
F_\beta (u) \rightarrow L_\infty(\beta) \quad \mbox{as} \quad u \rightarrow 0.
\]
We can check these by shifting the line of integration to $\Ree(s)=u^{k+\theta}$ and $\Ree(s)=u^{-\theta}$ respectively, for $\theta$ small and positive.
Further, one can show that 
\[
F_\beta (u) \;\ll\; 1 + u^\theta
\]
as $u$ approaches zero, for any small $\theta>0$. These statements also hold for $F_{2-\beta} (u)$. 
\section{Proof of Theorem \ref{thm1}}\label{AverageSection}
We now fix a prime $p$ at which $E$ has good reduction. We fix the Artin representation $\rho$ and let $\chi$ be a primitive Dirichlet character modulo $p^a$. Taking $a=n_p(\chi)$ to be sufficiently large, by Lemma \ref{newlemma} we know that $n_p(\rho\otimes\chi)= a \dim \rho$.
As we assumed $E$ has good reduction at $p$, we have
\[
N(E,\rho \otimes \chi) \;=\; N \, p^{2 a \dim \rho} .
\]
where $N$ is the prime-to-$p$ part of the conductor, which will remain fixed.
\par
We consider the twisted $L$-series
\[
L(E,\rho\otimes\chi,s) \;=\; \sum_{n \geq 1} \chi(n) \,c_n\, n^{-s}.
\]
In order to apply the approximate functional equation, we assume that it has the analytic continuation and functional equation specified in Conjecture \ref{feqn} for every Dirichlet character $\chi$ of conductor $p^a$ (which is hypothesised in Theorem \ref{thm1}).
\\\\
By the functional equation, it suffices to consider the values of $L(E,\rho\otimes\chi,s)$ in the right half of the critical strip. Let $\beta \in \C$ satisfy $1\leq \Ree(\beta)\leq 3/2$;
we apply equation \eqref{AFE} to write
\begin{eqnarray*}
\widehat{L}(E,\rho\otimes\chi,\beta)  &=& \sum_{n \geq 1} \chi(n) \,c_n \, n^{-\beta} F_\beta \left( \frac{ny}{N p^{2ad} } \right)
\\
&+& w(E,\rho,\chi) \, (Np^{2ad} )^{1-\beta} \; \sum_{n \geq 1} \overline{\chi}(n)\,c_n^* \, n^{\beta-2} F_{2-\beta} \left(\frac{n}{y}\right)
\end{eqnarray*}
for $y>0$; here we have re-normalised the variable $y$ via $y \mapsto yN^{-1}p^{-2ad}$. We observe that the $\chi$-twist does not change the Gamma-factor $L_\infty(s)$ so it does not affect the function $F$. 
\par
We now average over primitive characters modulo $p^a$: we define
\[
A(n) \;:=\; \sideset{}{^*}\sum_{\chi \; \mathrm{mod} \; p^a} \chi(n) \;F_\beta \left( \frac{ny}{N p^{2ad} } \right)
\]
and 
\[
B(n) \;:=\;   \sideset{}{^*}\sum_{\chi \; \mathrm{mod} \; p^a} w(E,\rho,\chi) \, \overline{\chi}(n) \, p^{2ad(1-\beta)} 
\]
where the symbol $\sum^*$ denotes the sum over primitive characters only.
Then we have
\begin{eqnarray*}
\sideset{}{^*}\sum_{\chi \; \mathrm{mod} \; p^a} \widehat{L}(E,\rho\otimes\chi,\beta)   &=& \sum_{n \geq 1} A(n) \,c_n \, n^{-\beta} 
\\
&+&  N^{1-\beta} \sum_{n \geq 1} B(n) \,c_n^* \, n^{\beta-2} F_{2-\beta} \left(\frac{n}{y}\right) .
\end{eqnarray*}
We will proceed to show that this sum tends to infinity with $a$ when 
\[
 \frac{3}{2}- \frac{2}{2d +1} \,<\, \Ree(\beta) \,\leq\, \frac{3}{2} 
\]
which will establish Theorem \ref{thm1}. We write the sum above in the form
\[
 \sideset{}{^*}\sum_{\chi \; \mathrm{mod} \; p^a} \widehat{L}(E,\rho\otimes\chi,\beta)
\;=\;
A(1) \;+\; \Sigma_1 \;+\; \Sigma_2
\]
where we have put
\[
\Sigma_1 \,=\, \sum_{n \geq 2} A(n) \,c_n \, n^{-\beta}
\quad \;\mbox{and}\; \quad 
\Sigma_2 \,=\, N^{1-\beta} \sum_{n \geq 1} B(n) \,c_n^* \, n^{\beta-2} F_{2-\beta} \left(\frac{n}{y}\right) .
\]
We will show that the parameter $y$ may be specified so that $|A(1)|\gg |\Sigma_1|$ and $|A(1)|\gg |\Sigma_2|$ as $a\rightarrow \infty$ which will prove the result.
\section{Estimating the sums}\label{EstimatingSums}
We introduce a parameter $x$ such that $xy = p^{2da}$ and assume both $x$ and $y$ are fixed positive powers of $p^a$ (we will discuss how they can be specified in \S\ref{Conclusion}). We have
\[
A(1) \;=\; \sideset{}{^*}\sum_{\chi \; \mathrm{mod} \; p^a} F_\beta \left( \frac{y}{N p^{2a d} } \right).
\]
By our choice of $x$ and $y$ we have
\[
 \frac{y}{N p^{2ad} } \;=\; \frac{1}{N x} \;\rightarrow\; 0
\]
as $a\rightarrow \infty$. Recalling that $F_\beta (u)$ tends to the non-zero constant $L_\infty(\beta)$ as $u\rightarrow 0$ we have
\[
|A(1)| \; \sim \,\; 
\sideset{}{^*}\sum_{\chi \; \mathrm{mod} \; p^a} 1 
\]
and therefore $|A(1)| \gg p^a$ (recalling that $p$ is constant and $a$ is tending to infinity). 
%%
%%
%%
% Sigma 1
%%
%%
Next we consider $\Sigma_1$; we have
\[
|A(n)| \;=\;  
\bigg|  F_\beta \left( \frac{ny}{N p^{2ad} } \right) \sideset{}{^*}\sum_{\chi \; \mathrm{mod} \; p^a} \chi(n) \; \bigg| .
\]
Suppose first that $n\leq x^{1+\epsilon}$ for $\epsilon$ small and positive. 
Using the estimate $F_\beta (u) \;\ll\; 1 + u^\theta$, we obtain 
\[
|A(n)| \; \ll \;  
x^\epsilon \,\bigg|\,\; \sideset{}{^*}\sum_{\chi \; \mathrm{mod} \; p^a} \chi(n) \; \bigg| .
\] 
By basic properties of character sums we have
\[
\sideset{}{^*}\sum_{\chi \; \mathrm{mod} \; m} \chi(n) \;= \sum_{b|(n-1,m)} \varphi(b) \, \mu\left( \frac{m}{b}\right)
\]
if $(n,m)=1$ (see \cite[3.8]{Iwaniec-Kowalski}). From this we infer that
the character sum factor in $|A(n)|$ is $\ll p^a$ if $n-1$ is divisible by $p^{a-1}$, and zero otherwise. The Dirichlet coefficients $c_n$ are known to satisfy $|c_n| \leq n^{1/2 + \epsilon}$ for any $\epsilon >0$, and putting these facts together we get
\begin{equation}\label{estimate1}
\Big| \sum_{2 \leq n \leq x^{1+\epsilon}}  A(n) \,c_n \, n^{-\beta} \Big|  \;\ll\;  p^a \, x^\epsilon \sideset{}{^\dagger}\sum_{2 \leq n \leq x^{1+\epsilon}}  n^{1/2-\Ree(\beta) + \epsilon}
\end{equation}
where $\sum^\dagger$ denotes the sum over integers congruent to 1 mod $p^{a-1}$ only. We then observe that
\begin{align*}
\sideset{}{^\dagger}\sum_{2 \leq n \leq x^{1+\epsilon}}  n^{-\theta} 
& =
\; \sum_{1 \leq r \leq x^{1+\epsilon} p^{1-a}}  (1+rp^{a-1})^{-\theta} 
\\
& \ll \; p^{-a\theta} \; \big( x^{1+\epsilon} \; p^{1-a} \big)^{1-\theta+\epsilon} \;
\sum_{1 \leq r }  r^{-1-\epsilon} 
\\
& \ll \; p^{-a+\epsilon} \, x^{1-\theta+\epsilon}
\end{align*}
for any constant $\theta>0$. Putting $\theta=\Ree(\beta)-1/2-\epsilon$ and combining this with \eqref{estimate1} we deduce
\begin{eqnarray*}
\Big| \sum_{2 \leq n \leq x^{1+\epsilon}}  A(n) \,c_n \, n^{-\beta} \Big| &\ll&   x^{3/2- \Ree(\beta) +\epsilon} .
\end{eqnarray*}
We must deal with the terms for $n> x^{1+\epsilon}$; however the fact that $u^{k} F_\beta (u) \rightarrow 0$ as $u \rightarrow \infty$ for any $k\geq 0$ shows that the contribution of these terms is negligible. We conclude that
\begin{equation}\label{Sigma1bound}
|\Sigma_1| \;\ll\;  x^{3/2-\Ree(\beta)+\epsilon} .
\end{equation}
We now consider the sum
\[
\Sigma_2 = N^{1-\beta} \sum_{n \geq 1} B(n) \,c_n^* \, n^{\beta-2} F_{2-\beta} \left( \frac{n}{y} \right).
\]
As above, it will suffice to bound the terms with $n \leq y^{1+\epsilon}$ as the tail will become constant due to the rapid decay of $F_{2-\beta} (u)$ as $u \rightarrow \infty$.
By the estimates for $F_{2-\beta} (u)$ in \S\ref{AFEsection} we have the bound
\[
F_{2-\beta} (u) \;\ll\; \frac{1+u^\epsilon}{1+u^2}
\]
for any small $\epsilon>0$. Following Luo's proof from \cite{Luo}, we apply Cauchy's inequality and split up the product to get
\[
\Big| \sum_{1 \leq n \leq y^{1+\epsilon}} B(n) \,c_n^* \, n^{\beta-2}\, F_{2-\beta} \left(  \frac{n 
}{y} \right) \Big|
 \;\ll\;  
\sum_{1 \leq n \leq y^{1+\epsilon}} \big| B(n) \,c_n^* \big| \, n^{\Ree(\beta)-2} \, \frac{1+(\frac{n}{y})^\epsilon}{1+(\frac{n}{y})^2}
\]
\[
\ll  
\left( \sum_{n \leq y^{1+\epsilon}} |c_n^*|^2 n^{2(\Ree(\beta)-2)} \Big(1+\Big(\frac{n}{y}\Big)^\epsilon \Big) \right)^{1/2}
\!\!
\left( \sum_{n \leq y^{1+\epsilon}} |B(n)|^2 \Big(1+\Big(\frac{n}{y}\Big)^2\Big)^{-1} \right)^{1/2} 
\]
\[
\ll \; y^{\Ree(\beta)-1+\epsilon} \left( \sum_{n \leq y^{1+\epsilon}} |B(n)|^2 \Big(1+\Big(\frac{n}{y}\Big)^2\Big)^{-1} \right)^{1/2} .
\]
Here we have bounded the first factor in the product above by a method analogous to that used for $\Sigma_1$. Let us define 
\[
H(u) := \frac{1}{\pi (1+u^2)} 
\] 
which is the Fourier transform of $e^{-2\pi|u|}$. By the estimate above we have
\[
|\Sigma_2|\;\ll \; y^{\Ree(\beta)-1+\epsilon} \left( \sum_{n \in \Z} |B(n)|^2 H\left( \frac{n}{y} \right) \right)^{1/2} .
\]
Here we have increased the range of the sum in order to apply Poisson summation later. Recall that
\[
B(n) \;=\;  \sideset{}{^*}\sum_{\chi \; \mathrm{mod} \; p^a} w_\chi \, \overline{\chi}(n) \, p^{2ad(1-\beta)} ,
\]
where we have written $w_\chi$ for the root number $w(E,\rho\otimes\chi)$. We have
\[
\sum_{n \in \Z} |B(n)|^2  H\left( \frac{n}{y} \right) 
 \,\leq\, 
\sideset{}{^*}\sum_{\chi \; \mathrm{mod} \; p^a} \; \sideset{}{^*}\sum_{\psi \; \mathrm{mod} \; p^a} p^{4ad (1-\Ree(\beta))}
\bigg| w_\chi \, \overline{w}_\psi \sum_{n \in \Z} 
\overline{\chi}\psi(n)  \,H\left( \frac{n}{y} \right) \!\! \bigg|.
\]
First we consider the diagonal terms in this sum: those with $\chi=\psi$. For these terms we get
\[
\sideset{}{^*}\sum_{\chi \; \mathrm{mod} \; p^a}  p^{4ad(1-\Ree(\beta))} 
 \bigg| w_\chi \, w_{\overline{\chi}} 
 \sum_{n \in \Z}   H\left( \frac{n}{y} \right) \bigg|
 \;\ll\; 
 p^{4ad(1-\Ree(\beta)) +a}  \; \sum_{n \in \Z}  \, H\left( \frac{n}{y} \right) 
\]
recalling that $|w_\chi|=1$. By the Poisson summation formula we have
\[
\sum_{n \in \Z}   H\left( \frac{n}{y} \right) \;=\; y \sum_{h \in \Z}   T(yh)
\]
where $T(u)=e^{-2\pi |u|}$. All terms in this sum decay exponentially with $y$, except that for $h=0$. So $y \sum_{h \in \Z}   T(yh) \ll y$ which implies that the diagonal terms are 
\[
\ll \; y \; p^{4ad(1-\Ree(\beta)) +a}   .
\]
Now we consider the terms with $\chi \neq \psi$. The characters $\chi \overline{\psi}$ are not primitive in general, therefore we use Poisson summation in the following form:
\[
 \sum_{n \in \Z} \overline{\chi}\psi(n) \, H\left( \frac{n}{y} \right)
 \;=\;
 \frac{y}{p^a} \, \sum_{h \in \Z} \tau_h(\chi\overline{\psi})  \, T\left( \frac{yh}{p^a} \right).
\]
Here we write 
\[
\tau_n(\chi\overline{\psi})= \sum_{r \,\mathrm{mod}\, p^a} \chi\overline{\psi}(r) \, e^{2 \pi i nr/p^a}
\] 
for the discrete Fourier transform of the character $\chi\overline{\psi}$ which is defined modulo $p^a$. This may be difficult to estimate when $\chi\overline{\psi}$ is not primitive, but for our purposes the trivial bound $|\tau_n(\chi\overline{\psi})| \leq p^a$ will suffice. We also note that $\tau_0(\chi\overline{\psi}) =0$ as $\chi\overline{\psi}$ is non-trivial here, and we obtain 
\[
 \bigg| \sum_{n \in \Z} \overline{\chi}\psi(n) \, H\left( \frac{n}{y} \right) \bigg|
 \; \leq \;
 y \, \sum_{h \in \Z, \, h \neq 0}   
 \, T\left( \frac{yh}{p^a} \right).
\]
%%
%% Most recent edit
%%
Recall that $y$ is a positive power of $p^a$. We assume that $y$ is chosen so that $y/p^a \rightarrow \infty$ (we will show in the next section that this assumption is consistent with other restrictions on the parameter $y$) and therefore all terms in the sum over $h$ decay exponentially with $p^a$. 
This removes the contribution from the factor $y$, so the right hand side of the above inequality is $O(1)$. 
\par
We deduce that the contribution of the terms with $\chi \neq \psi$ in the sum above is $\ll p^{4ad(1-\Ree(\beta))+2a}$. As we have now assumed $y/p^a \rightarrow \infty$ we see that diagonal terms in the sum are dominant, and we get
\[
\sum_{n \in \Z} |B(n)|^2 H\left( \frac{n}{y} \right) 
\; \ll \; 
p^{4da(1-\Ree(\beta)) + a} \, y .
\]
Recalling the earlier bound for $\Sigma_2$ obtained from Cauchy's inequality, we have
\begin{equation}\label{Sigma2bound}
|\Sigma_2|  \; \ll \;  y^{\Ree(\beta) -1/2+ \epsilon} \; p^{2da(1-\Ree(\beta)) +a/2} .
\end{equation}
\section{Conclusion}\label{Conclusion}
Let us write $p^a=P$ for our parameter which is tending to infinity.
We will now specify $x$ and $y$: we require them to satisfy $xy=P^{2d}$ and to tend to infinity with $P$, so we may write
\[
y \,=\,P^{2d\gamma}  \quad \mbox{and} \quad x \,=\, P^{2d (1-\gamma)}
\] 
for $0<\gamma <1$. Further, we required $y/P$ to tend to infinity, so we assume $\gamma > 1/2d$. 
Putting $\Ree(\beta)=\sigma$, we may write estimates \eqref{Sigma1bound} and \eqref{Sigma2bound} as follows:
\begin{align*}
 |\Sigma_1|   \; & \ll \;   P^{2d(1-\gamma)(3/2-\sigma+\epsilon)} 
\\
\mbox{and} \quad  |\Sigma_2|   \; & \ll \; P^{2d \gamma \, (\sigma-1/2+\epsilon) + 2d(1-\sigma) +1/2} 
\end{align*}
Recall that we have $|A(1)| \gg P$, so $A(1)$ will eventually grow more rapidly than $\Sigma_1$ and $\Sigma_2$ if the following two inequalities hold: 
\begin{align}
 2d(1-\gamma)(3/2-\sigma)   \;&<\; 1,
\label{ineq1}
\\
2d \gamma (\sigma-1/2) + 2d(1-\sigma)  \;&<\; 1/2 .
\label{ineq2}
\end{align}
Observe that we have neglected $\epsilon$ which may be chosen as small as we like. From \eqref{ineq1} and \eqref{ineq2} we deduce the bounds
\[
1-\frac{1}{2d(3/2-\sigma)} \,<\, \gamma  \quad \mbox{and} \quad \gamma  \,<\, \frac{1+4d(\sigma-1)}{4d(\sigma-1/2)} .
\]
One checks that there exists a value of $\gamma$ satisfying both of these inequalities precisely when $\sigma > (6d-1)/(4d+2)$. Further, combining the upper bound on $\gamma$ with the assumption $\gamma > 1/2d$ we obtain $\sigma > 1$; this is a strictly weaker condition, except in the case $d=1$.
Given the functional equation, these are the hypotheses imposed in Theorem \ref{thm1}, and this completes the proof. 
\section*{Acknowledgments}
The author thanks Andrew Booker and Daniel Delbourgo for their helpful suggestions and also thanks the referee for corrections to the paper.
\bibliographystyle{plain}
\bibliography{Ward-Nonvanishing-References}
\end{document}